\newtheorem{theorem}{Theorem}[section]
\newtheorem{lemma}[theorem]{Lemma}
\newtheorem{corollary}[theorem]{Corollary}
\newtheorem{question}[theorem]{Question}
\theoremstyle{definition}
\newtheorem{definition}[theorem]{Definition}
\newtheorem{proposition}[theorem]{Proposition}
\theoremstyle{remark}
\newtheorem{remark}[theorem]{Remark}
\def\R{{\mathbb R}}
\begin{document}

\title
{The continuous $d$-open homomorphism images and subgroups of $\mathbb{R}$-factorizabile paratopological groups }

\author{Li-Hong Xie}\thanks{}
\address{(L.H. Xie) School of Mathematics and Computational Science, Wuyi University, Jiangmen 529020, P.R. China} \email{xielihong2011@aliyun.com; yunli198282@126.com}
\author{Peng-Fei Yan}\thanks{}
\address{(P.F. Yan) School of Mathematics and Computational Science, Wuyi University, Jiangmen 529020, P.R. China} \email{ypengfei@sina.com}

\subjclass[2010]{54B20, 54D20}
\keywords{$\mathbb{R}$-factorizable paratopological groups; $z$-embedded; $d$-open mappings}
\thanks{This research is supported by NSFC (Nos. 11601393, 11861018) and the Innovation Project of Department of Education of Guangdong Province(no:2018KTSCX231).
}

\begin{abstract}
In this paper, we prove that: (1) Let $f:G\rightarrow H$ be a continuous $d$-open surjective homomorphism; if $G$ is an $\mathbb{R}$-factorizabile paratopological group, then so is $H$. Peng and Zhang's result \cite[Theorem 1.7]{PZ} is improved. (2) Let $G$ be a regular $\mathbb{R}$-factorizable paratopological group; then every subgroup $H$ of $G$ is $\mathbb{R}$-factorizable  if and only if $H$ is $z$-embedded in $G$. This result gives out a positive answer to an question of M.~Sanchis and M.~Tkachenko \cite[Problem 5.3]{ST}.
\end{abstract}

\maketitle

\section{Introduction}
A \textit{paratopological} group is a group
with a topology such that multiplication on the group is jointly
continuous. If in addition inversion on the group is continuous, then it is
called a \textit{topological} group.

For every continuous real-valued function $f$ on a compact topological
group $G$, one can find a continuous homomorphism $p\colon G\to L$
onto a second countable topological group $L$  and a continuous real-valued
function $h$ on $L$ such that $f=h\circ p$ (see \cite[Example 37]{Pon}).
The conclusion remains valid for pseudocompact topological groups,
a result due to W.\,W.~Comfort and K.\,A.~Ross \cite{CR}. These facts
motivated M.~Tkachenko to introduce \textit{$\R$-factorizable}
groups in \cite{Tk} as the topological groups $G$ with the property that
every continuous real-valued function on $G$ can be factorized through
a continuous homomorphism onto a second countable topological
group. The class of $\R$-factorizable groups is unexpectedly wide.
For example, it contains arbitrary subgroups of $\sigma$-compact
(and even Lindel\"{o}f  $\Sigma$-) groups, topological products of
Lindel\"{o}f $\Sigma$-groups, and their dense subgroups \cite{Tk2}.
For other properties of this class of topological groups, the reader is
referred to \cite{Tk4,Tk3}. Similarly to the case of topological groups, M.~Sanchis and
M.~Tkachenko introduced in \cite{ST} the classes of $\R_i$-factorizable
paratopological groups, for $i\in\{1,2,3\}$. The need in
the use of four different subscripts was due to the fact that the classes
of $T_1$, Hausdorff and regular
paratopological groups are all distinct, while $T_0$ topological groups
are completely regular.

\begin{definition}\cite{ST,XTS}\label{Def1}
\textit{A paratopological group $G$ is $\R_0$-factorizable
$(\,\R_i$-factorizable, for $i = 1, 2, 3, 3.5)$ if for every continuous
real-valued function $f$ on $G$, one can find a continuous
homomorphism $\pi\colon G \to H$ onto a second-countable
paratopological group $H$ satisfying the $T_0$ (resp., $T_i+T_1$)
separation axiom and a continuous real-valued function $h$ on
$H$ such that $f = h\circ\pi$. If we do not impose any separation
restriction on $H$, we obtain the concept of $\R$-factorizability.}
\end{definition}

The following result shows that all $\R$-factorizable and
$\R_i$-factorizable, (for $i =0, 1, 2, 3)$ paratopological groups coincide.
\begin{theorem}\cite[Theorem 3.8]{XTS}\label{T1}
 Every $\mathbb{R}$-factorizable paratopological group is $\mathbb{R}_3$-factorizable. Hence the concepts of $\mathbb{R}$-, $\mathbb{R}_0$-,
$\mathbb{R}_1$- ,$\mathbb{R}_2$-, and $\mathbb{R}_3$-factorizability coincide in the class of paratopological groups.
\end{theorem}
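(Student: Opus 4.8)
The plan is to start from the weakest factorization guaranteed by the hypothesis and then to upgrade the separation properties of the target group without destroying the factorization, exploiting throughout that the range $\R$ is a Tychonoff (in particular regular and $T_1$) space. So let $f\colon G\to\R$ be an arbitrary continuous function on an $\R$-factorizable paratopological group $G$. By Definition~\ref{Def1} there are a continuous homomorphism $\pi\colon G\to H$ onto a second-countable paratopological group $H$ (with no separation assumed) and a continuous function $h\colon H\to\R$ with $f=h\circ\pi$. My goal is to produce a second-countable regular $T_1$ paratopological group $H'$, a continuous homomorphism $\pi'\colon G\to H'$, and a continuous $h'\colon H'\to\R$ with $f=h'\circ\pi'$; that will establish the $\R_3$-factorizability of $G$.

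First I would record the one structural fact that makes $h$ insensitive to the poor separation of $H$: since $\R$ is Tychonoff, every set of the form $h^{-1}(U)$ with $U\subseteq\R$ open is a cozero-set of $H$. Consequently $h$ remains continuous for the completely regular modification $\tau^{*}$ of the topology of $H$, i.e.\ the topology on the underlying set of $H$ generated by the cozero-sets of $(H,\tau)$. This modification is completely regular by construction, and after collapsing the points that cannot be separated by continuous real-valued functions one obtains a Tychonoff space on which $h$ still factors. The remaining work is thus to realise a suitable piece of $\tau^{*}$ as a \emph{group} topology that is moreover second-countable.

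The key step is to build the target as a reflection in the category of paratopological groups. I would pass to the regular ($T_3+T_1$) reflection $\varphi\colon H\to H_{3}$ of the paratopological group $H$ and set $\pi'=\varphi\circ\pi$. Two lemmas must then be supplied. (i) \emph{Second-countability is inherited by the reflection}: because $H$ is second-countable and the reflection is a continuous homomorphism onto $H_3$ whose topology is generated by the images of a countable family of cozero-sets, $H_3$ is again second-countable, and being regular and $T_1$ it is metrizable by Urysohn's theorem (so in this setting the regular and Tychonoff reflections coincide). (ii) \emph{The scalar function descends}: by the observation of the previous paragraph $h$ is continuous for the completely regular modification, and since $H_3$ is Tychonoff by (i) while the reflection is universal among continuous maps of $H$ into Tychonoff spaces, the function $h$ factors as $h=h'\circ\varphi$ for a continuous $h'\colon H_3\to\R$. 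Then $f=h\circ\pi=h'\circ\varphi\circ\pi=h'\circ\pi'$, so $H'=H_3$ does the job.

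The main obstacle is exactly the interface between the topological reflection and the group structure: one must verify that the completely regular modification of a paratopological group yields a paratopological group topology carrying a second-countable regular group quotient through which $h$ still factors — equivalently, that the $T_3$-reflection functor both exists on paratopological groups and preserves second-countability. This is the technical heart of the argument and the only point where the paratopological (rather than topological) setting bites, since inversion need not be continuous, so the invariance of the collapsing subgroup and the left-invariance of the associated prequasimetric witnessing metrizability require arguments special to paratopological groups. Granting these facts, the last assertion of the theorem is immediate: the implications $\R_3\Rightarrow\R_2\Rightarrow\R_1\Rightarrow\R_0\Rightarrow\R$ hold trivially because a group satisfying a stronger separation axiom is in particular a group, so the reverse implication proved above closes the whole chain into equivalences of $\R$-, $\R_0$-, $\R_1$-, $\R_2$-, and $\R_3$-factorizability.
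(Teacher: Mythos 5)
You should first note what the target of comparison is here: the paper does not prove this statement at all --- it is imported verbatim from \cite[Theorem 3.8]{XTS} --- so your attempt can only be measured against the argument of \cite{XTS} and against the machinery this paper deploys elsewhere (the $T_2$-reflection and the semiregularization in the proof of Theorem~\ref{TTT}). Your skeleton does match that machinery: factor $f$ as $h\circ\pi$ through an arbitrary second-countable $H$, observe that preimages under $h$ of open sets are cozero-sets, and upgrade $H$ by a reflection that preserves second countability while keeping $h$ continuous; the closing chain $\R_3\Rightarrow\R_2\Rightarrow\R_1\Rightarrow\R_0\Rightarrow\R$ is handled correctly.

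The genuine gap is that both of the ``lemmas to be supplied'' are exactly the content of the theorem, and the justifications you sketch for them do not stand as written. For your lemma (ii): the defining universal property of the $T_3$-reflection of a paratopological group concerns continuous \emph{homomorphisms onto regular paratopological groups}, not continuous maps into Tychonoff \emph{spaces}; invoking the latter property begs the question, since ``$h$ factors through a regular second-countable group'' is precisely the assertion $\R\Rightarrow\R_3$. The property is true, but it must be derived, and the standard derivation is two-step: first factor $h$ through $T_2(H)$, whose universality with respect to continuous maps into Hausdorff spaces is a genuine theorem of \cite{T2}; then pass to the semiregularization $(T_2(H))_{sr}$, which is a regular paratopological group by Theorem~\ref{th1} (and completely regular by \cite[Corollary 5]{BR}). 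Continuity of $h$ survives this second step by a correct deployment of your own cozero observation: any cozero-set $U=g^{-1}(\R\setminus\{0\})$ satisfies $U=\bigcup_{n}\mathrm{Int}\,\overline{U_n}$ where $U_n=\{x:|g(x)|>1/n\}$, because $U_n\subseteq \mathrm{Int}\,\overline{U_n}\subseteq\overline{U_n}\subseteq U_{n+1}$; hence every cozero-set is open in the semiregularization. By contrast, your ``completely regular modification $\tau^{*}$'' is not known to be a group topology --- which is exactly why the semiregularization, which \emph{is} one by Ravsky's theorem, is the correct vehicle, and you never mention it. For your lemma (i): the claim that the reflection's topology is ``generated by the images of a countable family of cozero-sets'' is unsubstantiated; the correct reasons are that the canonical homomorphism $\varphi_{H,2}$ is open (so $T_2(H)$ is second countable as an open continuous image, as the paper itself notes in the proof of Proposition~\ref{p2}), and that for any base $\mathcal{B}$ of a space $X$ the family $\{\mathrm{Int}\,\overline{B}:B\in\mathcal{B}\}$ is a base of $X_{sr}$; the Urysohn-metrization detour is then unnecessary. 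With these two repairs your outline becomes, in substance, the proof in \cite{XTS}.
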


It is well known that every quotient group of $\R$-factorizable topological group is $\R$-factorizable \cite[Theorem 3.10]{Tk}. This motivated M.Sanchis and M.~Tkachenko posed the following question:

\begin{question}\cite[Problem 5.2] {ST}
Let $G$ be an $\mathbb{R}_{i}$-factorizable paratopological group,
for some $i\in\{1, 2, 3, 3.5 \}$. Is every open continuous homomorphic image $H$ of
$G$ an
$\mathbb{R}_{i}$-factorizable paratopological group, provided that $H$ satisfies the $T_{i}$-separation axiom?
\end{question}

To solve this problem, the first author and S. Lin \cite{XS} introduce the concept of property $\omega$-$QU$ in paratopological groups (see Definition \ref{D}). They give this question a partial answer. In 2013, the first author, S. Lin and M.~Tkachenko proved that: Every quotient of a totally $\omega$-narrow (or Tychonoff)$\R$-factorizable paratopological group $G$ is $\R$-factorizable \cite[Propositions 3.19 and 3.20]{XTS}. And in the same paper, they point out that it is still an open problem whether every open continuous homomorphic image of an $\mathbb{R}$-factorizable paratopological
group is $\mathbb{R}$-factorizable. Recently, L.X. Peng and P. Zhang  answered this problem affirmatively (see \cite[Theorem 1.7]{PZ}).

Let $f: X \rightarrow Y$ be a mapping. Then $f$ is said to be
{\it $d$-open} if for any open set $O$ of $X$ there exists an open set $V$ of $f(X)$ such that $f(O)$ is a
dense subset of $V$. Clearly, every continuous open mapping is continuous $d$-open. In this paper, firstly we show that every continuous $d$-open homomorphism preserves the $\mathbb{R}$-factorizablilties in paratopological groups.

Secondly. Recall that a subspace $Y$ of a space $X$ is said to be {\it $z$-embedded} in $X$ if for every zero-set $Z$ in $Y$, there exists a zero-set $C$ in $X$ such that $Z = C\cap Y$. It was shown in \cite[Theorem 2.4]{HT} that a subgroup $G$ of an $\R$ -factorizable topological group $H$ is
$\R$-factorizable if and only if $G$ is $z$-embedded in $H$. In \cite{ST}, M.~Sanchis and
M.~Tkachenko proved that a $z$-embedded subgroup of an $\mathbb{R}_i$-factorizable paratopological group is $\mathbb{R}_i$-factorizable, for $i = 1, 2, 3$ \cite[Theorem 3.12]{ST} and posed the following question:

\begin{question}\cite[Problem 5.3]{ST}\label{Q}
 Let $G$ be a subgroup of a completely regular paratopological group $H$. If $G$ is $\R_3$-factorizable, must it be
$z$-embedded in $H$? What if $G$ is totally Lindel\"{o}f?
\end{question}

In this paper, we give a positive answer to the above question (see Theorem \ref{TT}).

Let $X$ be a space with a topology $\tau$. Then the family $\{\text{~Int~} \overline{U} : U\in \tau\}$ constitutes a base for a coarser topology $\sigma$ on $X$. The space $X_{sr} = (X, \sigma)$ is called the {\it semiregularization}
of $X$. The following very useful result was proved by Ravsky in \cite{Rav01} (see also \cite[Theorem 2.2]{Tk_inf}):

\begin{theorem}\label{th1}
Let $G$ be an arbitrary paratopological group. Then the space $G_{sr}$ carrying the same group
structure is a $T_3$ paratopological group. If $G$ is Hausdorff, then $G_{sr}$ is a regular paratopological group.
\end{theorem}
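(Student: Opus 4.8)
\emph{Overview.} The statement packages two assertions: that $(G,\sigma)=G_{sr}$ is again a paratopological group, and that it satisfies the regularity axiom, becoming regular Hausdorff when $G$ is Hausdorff. My plan rests on three elementary facts about a paratopological group $(G,\tau)$. (i) Joint continuity of multiplication gives $\mathrm{cl}_\tau A\cdot \mathrm{cl}_\tau B\subseteq \mathrm{cl}_\tau(AB)$ for all $A,B\subseteq G$: if $x\in\mathrm{cl}_\tau A$, $y\in\mathrm{cl}_\tau B$ and $O_1O_2\subseteq O$ with $O_1\ni x$, $O_2\ni y$, then points of $A,B$ chosen in $O_1,O_2$ witness that $O$ meets $AB$. (ii) For $\tau$-open $U,V$ the product $UV=\bigcup_{u\in U}uV$ is $\tau$-open, since left translations are homeomorphisms. (iii) Translations, being $\tau$-homeomorphisms, preserve interiors and closures, hence carry regular-open sets to regular-open sets; thus they are homeomorphisms of $(G,\sigma)$ as well, so $G_{sr}$ is topologically homogeneous and both the group and separation axioms need only be checked at the identity $e$.

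\emph{Continuity of multiplication.} Let $W=\mathrm{Int}_\tau\mathrm{cl}_\tau W$ be a basic $\sigma$-neighbourhood of $ab$. As $W$ is $\tau$-open and multiplication is $\tau$-continuous, pick $\tau$-open $U_1\ni a$, $V_1\ni b$ with $U_1V_1\subseteq W$, and set $U=\mathrm{Int}_\tau \mathrm{cl}_\tau U_1$, $V=\mathrm{Int}_\tau\mathrm{cl}_\tau V_1$; these are regular-open (so basic $\sigma$-open) and contain $a,b$. By (i) and (ii), $UV$ is $\tau$-open and $UV\subseteq \mathrm{cl}_\tau U_1\cdot\mathrm{cl}_\tau V_1\subseteq \mathrm{cl}_\tau(U_1V_1)\subseteq \mathrm{cl}_\tau W$, whence, being a $\tau$-open subset of $\mathrm{cl}_\tau W$, it lies in $\mathrm{Int}_\tau\mathrm{cl}_\tau W=W$. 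Thus $UV\subseteq W$, which is exactly joint $\sigma$-continuity, and $G_{sr}$ is a paratopological group.

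\emph{Regularity.} I first note that $\tau$- and $\sigma$-closures agree on $\tau$-open sets: if $V$ is $\tau$-open and $x\notin\mathrm{cl}_\tau V$, choose $\tau$-open $U\ni x$ with $U\cap V=\emptyset$; then $\mathrm{cl}_\tau U\cap V=\emptyset$, so $\mathrm{Int}_\tau\mathrm{cl}_\tau U$ is a $\sigma$-open neighbourhood of $x$ missing $V$, giving $\mathrm{cl}_\sigma V\subseteq\mathrm{cl}_\tau V$, while the reverse inclusion is automatic from $\sigma\subseteq\tau$. By homogeneity it suffices to separate $e$ from closed sets, i.e.\ to find, for each regular-open $W\ni e$, a $\sigma$-open $V\ni e$ with $\mathrm{cl}_\sigma V\subseteq W$. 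Continuity of multiplication at $(e,e)$ gives a $\tau$-open $U_1\ni e$ with $U_1U_1\subseteq W$. The crucial point is $\mathrm{cl}_\tau U_1\subseteq W$: for $x\in\mathrm{cl}_\tau U_1$ the set $xU_1$ is $\tau$-open, contains $x$, and $xU_1\subseteq \mathrm{cl}_\tau U_1\cdot\mathrm{cl}_\tau U_1\subseteq \mathrm{cl}_\tau(U_1U_1)\subseteq\mathrm{cl}_\tau W$, so $x\in\mathrm{Int}_\tau\mathrm{cl}_\tau W=W$. Taking $V=\mathrm{Int}_\tau\mathrm{cl}_\tau U_1$, a $\sigma$-open neighbourhood of $e$, we get $\mathrm{cl}_\sigma V=\mathrm{cl}_\tau V\subseteq\mathrm{cl}_\tau U_1\subseteq W$, which establishes the $T_3$ (regularity) axiom for $G_{sr}$.

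\emph{Hausdorff case and main obstacle.} If $G$ is Hausdorff, then for $x\neq e$ take disjoint $\tau$-open $A\ni x$, $B\ni e$; as above $\mathrm{cl}_\tau B\cap A=\emptyset$, so $\mathrm{Int}_\tau\mathrm{cl}_\tau B$ is a $\sigma$-open set containing $e$ but not $x$, making $G_{sr}$ a $T_0$ space; a space satisfying the regularity axiom together with $T_0$ is Hausdorff, so $G_{sr}$ is regular in the full sense. The main obstacle, and precisely the reason $G$ itself can fail to be regular, is that inversion need not be continuous: estimating $\mathrm{cl}_\tau U_1$ through the open neighbourhood $xU_1$ naively yields only $x\in U_1U_1^{-1}$, and $U_1^{-1}$ need not be a neighbourhood of $e$. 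Passing to the coarser regular-open topology dissolves this, since one then needs $\mathrm{cl}_\tau U_1$ only inside $\mathrm{Int}_\tau\mathrm{cl}_\tau W$ rather than inside $W$, and the inclusion $\mathrm{cl}_\tau U_1\cdot\mathrm{cl}_\tau U_1\subseteq\mathrm{cl}_\tau(U_1U_1)$ delivers exactly this weaker containment without any reference to inverses.
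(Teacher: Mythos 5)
Your proof is correct and complete: the inclusion $\mathrm{cl}_\tau A\cdot\mathrm{cl}_\tau B\subseteq\mathrm{cl}_\tau(AB)$, the trick $xU_1\subseteq\mathrm{cl}_\tau(U_1U_1)$ showing $\mathrm{cl}_\tau U_1\subseteq\mathrm{Int}_\tau\mathrm{cl}_\tau W$, the coincidence of $\tau$- and $\sigma$-closures on $\tau$-open sets, and the step from $T_3+T_0$ to full regularity all check out. The paper itself gives no proof of this statement (it is quoted from Ravsky and Tkachenko's survey), and your argument is essentially the standard one from those sources, so there is nothing to flag.
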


The spaces we consider are not assumed to satisfy any separation
axiom, unless the otherwise is stated explicitly. Further, $T_{3}$ and
$T_{3.5}$ do not include $T_{1}$, while \lq{regular\rq} and \lq{completely
regular\rq} mean $T_3 + T_1$ and $T_{3.5} + T_1$, respectively.

\section{Main results}
In this section, firstly, we prove that every continuous $d$-open homomorphism preserves the $\mathbb{R}$-factorizablilties in paratopological groups.

\begin{theorem}\label{TTT}
Let $f:G\rightarrow H$ be a continuous $d$-open surjective homomorphism. If $G$ is $\mathbb{R}$-factorizabile paratopological group, then so is $H$
\end{theorem}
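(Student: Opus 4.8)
The plan is to verify the defining property of $\R$-factorizability for $H$ directly. Let $\phi$ be an arbitrary continuous real-valued function on $H$. Since $f$ is continuous, $\phi\circ f$ is a continuous real-valued function on $G$, and because $G$ is $\R$-factorizable we may, invoking Theorem \ref{T1}, write $\phi\circ f=h\circ\pi$, where $\pi\colon G\to K$ is a continuous surjective homomorphism onto a second-countable \emph{regular} paratopological group $K$ and $h$ is continuous on $K$. Put $N=\ker f$; as $N$ is normal in $G$ and $\pi$ is onto, $S=\pi(N)$ is a normal subgroup of $K$, and the quotient homomorphism $q\colon K\to L:=K/S$ is open, continuous, and onto the second-countable paratopological group $L$ (no separation axiom on $L$ is needed, so no closure of $S$ is required). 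Since $\phi\circ f$ is constant on the cosets of $N$, the function $h$ is constant on the cosets of $S$, so $h=\bar h\circ q$ for some continuous $\bar h$ on $L$; and $\rho:=q\circ\pi\colon G\to L$ is a continuous homomorphism which is constant on the cosets of $N$. Consequently $\rho$ drops to a unique homomorphism $\tilde\pi\colon H\to L$ with $\tilde\pi\circ f=\rho$, and then $\bar h\circ\tilde\pi\circ f=h\circ\pi=\phi\circ f$, whence $\phi=\bar h\circ\tilde\pi$ by surjectivity of $f$. Thus the whole statement reduces to a single point: the continuity of $\tilde\pi$.

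This is exactly where the weakening from \emph{open} to \emph{$d$-open} bites. When $f$ is open it is a quotient map and the continuity of $\tilde\pi$ is automatic; a $d$-open $f$ need not be a quotient map, so another device is required. My device is semiregularization. I would first record the general fact that a space $X$ and its semiregularization $X_{sr}$ carry exactly the same continuous real-valued functions, and more generally the same continuous maps into any regular space: given a target neighbourhood $W$ of the image point, pick $W'$ with $\overline{W'}\subseteq W$; then $\mathrm{Int}\,\overline{(\text{preimage of }W')}$ is a $\sigma$-neighbourhood that is carried into $W$. Combined with Theorem \ref{T1}, this yields that a paratopological group is $\R$-factorizable if and only if its semiregularization is. It also lets me replace $L$ by the $T_3$ second-countable group $L_{sr}$ furnished by Theorem \ref{th1} without disturbing the identities above: $\bar h$ remains continuous on $L_{sr}$ and $\rho$ remains continuous into $L_{sr}$. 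Hence it is enough to prove that $\tilde\pi\colon H\to L_{sr}$ is continuous, and since $L_{sr}$ is regular this is in turn equivalent to the continuity of $\tilde\pi\colon H_{sr}\to L_{sr}$.

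The crux, and the step I expect to be the main obstacle, is therefore the purely structural claim that a continuous $d$-open surjective homomorphism $f$ induces an \emph{open} (equivalently, quotient) continuous surjective homomorphism $\hat f\colon G_{sr}\to H_{sr}$. Granting it, $\hat f$ is a quotient map, so from $\tilde\pi\circ\hat f=\rho\colon G_{sr}\to L_{sr}$ continuous one concludes that $\tilde\pi\colon H_{sr}\to L_{sr}$ is continuous, which finishes the factorization of $\phi$ and hence the proof. To establish the claim I would argue by homogeneity, reducing openness of $\hat f$ to showing that the image of a basic $\sigma_G$-neighbourhood $\mathrm{Int}_G\overline U$ of the identity (with $U$ a $\tau_G$-open neighbourhood of the identity) contains a $\sigma_H$-neighbourhood of the identity, and analysing this through the regular-open calculus, using that $d$-openness makes $f(U)$ dense in the regular-open set $\mathrm{Int}_H\,\overline{f(U)}$ and that, by Theorem \ref{th1}, $G_{sr}$ and $H_{sr}$ are $T_3$. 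Converting the mere density supplied by $d$-openness into genuine openness of $\hat f$ is the delicate part; this is where the argument must go beyond the open case treated by Peng and Zhang, and it is here that Ravsky's theorem (Theorem \ref{th1}) is indispensable.
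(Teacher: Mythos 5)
Your reduction of the theorem to the continuity of the induced homomorphism $\tilde\pi$ is sound: the algebra with $N=\ker f$, $S=\pi(N)$, $L=K/S$, the factorization $\phi=\bar h\circ\tilde\pi$, and the replacement of $L$ by $L_{sr}$ via Theorem \ref{th1} are all correct. But the structural claim on which everything then rests --- that a continuous $d$-open surjective homomorphism induces an \emph{open} (equivalently quotient) homomorphism $\hat f\colon G_{sr}\to H_{sr}$ --- is false, not merely delicate. Counterexample: let $G$ be $\Q$ with the topology inherited from the diagonal embedding into $\R\times\Q_p$, let $H$ be $\Q$ with the $p$-adic topology, and let $f$ be the identity map. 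Both are metrizable topological groups, hence regular, so $G_{sr}=G$, $H_{sr}=H$ and $\hat f=f$. The basic neighbourhood $B=\{x\in\Q:|x|<\varepsilon,\ |x|_p\le p^{-n}\}$ of $0$ is $p$-adically dense in the $p$-adically clopen set $\{x:|x|_p\le p^{-n}\}$: given $t$ with $|t|_p\le p^{-n}$ and $m\ge n$, choose a prime $q\ne p$ and $z\in\Z[1/q]$ with $|t+p^mz|<\varepsilon$ (possible since $\Z[1/q]$ is dense in $\R$); then $x=t+p^mz$ lies in $B$ and $|x-t|_p\le p^{-m}$. Translating and taking unions, $f$ is $d$-open in the sense of the paper, and it is plainly continuous and bijective. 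It is not open, since $B$ contains no $p$-adic ball (any such ball contains elements $p^mk$ with $k$ a large integer, of usual absolute value $\ge\varepsilon$), and a continuous bijection that is not a homeomorphism is not a quotient map either. Note that $G$ here is second countable, hence $\R$-factorizable, so the failure occurs under the exact hypotheses of the theorem: your proof cannot be completed along these lines.

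This is precisely why the paper takes a different route. Its Proposition \ref{p2}(1) establishes only that $f\colon G_{sr}\to H_{sr}$ remains $d$-open --- never open --- and instead of forcing an exact factorization through a quotient, it invokes the characterization that a Tychonoff paratopological group is $\R$-factorizable iff it is totally $\omega$-narrow and has property $\omega$-$QU$ \cite[Theorem 3.21]{XTS} (after first reducing to the Hausdorff case via $T_2$-reflections and to the Tychonoff case via semiregularization). Total $\omega$-narrowness passes through any continuous surjective homomorphism, and property $\omega$-$QU$ is an $\varepsilon$-approximation property, so the mere density furnished by $d$-openness suffices: in Proposition \ref{p1} one has $k\bigl(h\overline{f(V)}^{\circ}\bigr)\subseteq\overline{k(f(gV))}$, and an $\varepsilon/2$-estimate on $f(gV)$ survives the closure as an $\varepsilon$-estimate. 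Your scheme, by contrast, demands that $\tilde\pi$ be exactly continuous, i.e.\ that images of open sets be open rather than dense in open sets, and that is exactly the information $d$-openness does not provide; any repair of the direct factorization argument would have to replace the quotient-map step by an approximation device of this kind, which in effect reconstructs the paper's property-$\omega$-$QU$ argument.
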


To prove Theorem \ref{TTT}, we need establish some facts as following.

A real-valued function $f$ on a paratopological group $G$ is {\it left}
(resp. {\it right}) {\it $\omega$-quasi-uniformly continuous} if, for
every $\varepsilon>0$, there exists a countable family $\mathcal {U}$ of open neighbourhoods of the identity in $G$ such that for every $x\in G$,
there exists $U\in \mathcal {U}$ such that $|f(x)-f(y)|<\varepsilon$
whenever $x^{-1}y\in U$ (resp. whenever $yx^{-1}\in U$) \cite[Definition 4.1]{XS}. A real-valued function $f$ on a paratopological group is {\it
$\omega$-quasi-uniformly continuous} if $f$ is both left and right
$\omega$-quasi-uniformly continuous \cite[Definition 4.2]{XS}.

\begin{definition}\cite[Definition 4.7]{XS}\label{D}
A paratopological group $G$ has {\it property $\omega$-$QU$} if each continuous
real-valued function on $G$ is $\omega$-quasi-uniformly continuous.
\end{definition}

\begin{remark}
One can easily show that a paratopological group $G$ has property $\omega$-$QU$ if and only if for each continuous function $f:G\rightarrow \mathbb{R}$ and any $\epsilon>0$, there is a family $\{V_n:n\in \omega\}$ of open neighbourhoods of the identity in $G$ such that for each $g\in G$, there is $n\in \omega$ satisfying $f(gV_n)\subseteq (f(g)-\epsilon, f(g)+\epsilon)$ and $f(V_ng)\subseteq (f(g)-\epsilon, f(g)+\epsilon)$.
\end{remark}

\begin{proposition} \label{p1}
Let $f:G\rightarrow H$ be a continuous $d$-open surjective homomorphism. If $G$ has property $\omega$-$QU$, then so is $H$
\end{proposition}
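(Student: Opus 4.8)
The plan is to reduce everything to the reformulation in the Remark and to transport a witnessing family of neighbourhoods from $G$ to $H$. Fix a continuous function $\phi\colon H\to\mathbb{R}$ and $\varepsilon>0$; I must produce a countable family of open neighbourhoods of the identity $e_H$ of $H$ with the property stated in the Remark. Since $f$ is continuous, the composition $\phi\circ f\colon G\to\mathbb{R}$ is a continuous real-valued function on $G$. As $G$ has property $\omega$-$QU$, the Remark (applied with $\varepsilon/2$) yields a countable family $\{U_n:n\in\omega\}$ of open neighbourhoods of $e_G$ such that for every $x\in G$ there is $n\in\omega$ with $(\phi\circ f)(xU_n)$ and $(\phi\circ f)(U_nx)$ both contained in $((\phi\circ f)(x)-\varepsilon/2,\,(\phi\circ f)(x)+\varepsilon/2)$.

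Next I would push this family forward. Because $f$ is $d$-open and surjective, for each $n$ there is an open set $V_n\subseteq H$ in which $f(U_n)$ is dense, so $f(U_n)\subseteq V_n\subseteq\overline{f(U_n)}$. Since $f$ is a homomorphism, $e_H=f(e_G)\in f(U_n)\subseteq V_n$, whence each $V_n$ is an open neighbourhood of $e_H$. I claim $\{V_n:n\in\omega\}$ witnesses that $\phi$ is $\omega$-quasi-uniformly continuous. Given $h\in H$, choose $x\in G$ with $f(x)=h$ by surjectivity, and pick $n$ as above for this $x$. Then, using that $f$ is a homomorphism, $\phi(hf(U_n))=\phi(f(xU_n))\subseteq(\phi(h)-\varepsilon/2,\phi(h)+\varepsilon/2)$, and symmetrically $\phi(f(U_n)h)\subseteq(\phi(h)-\varepsilon/2,\phi(h)+\varepsilon/2)$.

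The main obstacle is that $f(U_n)$ is only dense in $V_n$, not equal to it, so the estimate just obtained controls $\phi$ only on the dense subsets $hf(U_n)$ and $f(U_n)h$ of $hV_n$ and $V_nh$, respectively. Here I would invoke the continuity of $\phi$ together with the fact that left and right translations in a paratopological group are homeomorphisms: since $hf(U_n)$ is dense in $hV_n$, we have $hV_n\subseteq\overline{hf(U_n)}$ and hence $\phi(hV_n)\subseteq\overline{\phi(hf(U_n))}\subseteq[\phi(h)-\varepsilon/2,\phi(h)+\varepsilon/2]\subseteq(\phi(h)-\varepsilon,\phi(h)+\varepsilon)$; the passage from the open interval to its closure is exactly why I start with $\varepsilon/2$. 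The same argument with right translations gives $\phi(V_nh)\subseteq(\phi(h)-\varepsilon,\phi(h)+\varepsilon)$, and by the Remark this establishes property $\omega$-$QU$ for $H$.
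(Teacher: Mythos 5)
Your proof is correct and follows essentially the same route as the paper's: compose with $f$, invoke property $\omega$-$QU$ of $G$ at level $\varepsilon/2$, push the neighbourhoods forward via $d$-openness, and use density together with continuity of $\phi$ and the homeomorphic translations to pass from $f(U_n)$ to the larger open set, absorbing the closed interval $[\phi(h)-\varepsilon/2,\phi(h)+\varepsilon/2]$ into $(\phi(h)-\varepsilon,\phi(h)+\varepsilon)$. The only cosmetic difference is that the paper takes the neighbourhoods to be $\overline{f(U_n)}^{\circ}$ (interior of the closure) rather than your witnessing open sets $V_n\subseteq\overline{f(U_n)}$, which is interchangeable in this argument.
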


\begin{proof}
Let $k:H\rightarrow \mathbb{R}$ be a continuous function. Fix $\epsilon>0$. Since $G$ has property $\omega$-$QU$, there is a family $\{V_n:n\in \omega\}$ such that for each $g\in G$ there is $n_0\in\omega$ satisfying that $k\circ f(gV_{n_0})\subseteq (k\circ f(g)-\frac{\epsilon}{2}, k\circ f(g)+\frac{\epsilon}{2})$ and $k\circ f(V_{n_0}g)\subseteq (k\circ f(g)-\frac{\epsilon}{2}, k\circ f(g)+\frac{\epsilon}{2})$. Since $f$ is a $d$-open homomorphism, the family $\{\overline{f(V_n)}^\circ: n\in \omega\}$ is the open neighbourhoods of the identity in $H$. Now we shall show that the family $\{\overline{f(V_n)}^\circ: n\in \omega\}$ satisfying that for each $h\in H$ there is $i\in \omega$ such that $ k(h\overline{f(V_{i})}^\circ)\subseteq (k(h)-\epsilon, k(h)+\epsilon)$ and $k(\overline{f(V_{i})}^\circ h)\subseteq (k(h)-\epsilon, k(h)+\epsilon)$. This implies that $H$ has the property $\omega$-$QU$.

 Take a $g\in G$ such that $h=f(g)$. Then there is $n_0\in\omega$ satisfying that $$k\circ f(gV_{n_0})\subseteq (k\circ f(g)-\frac{\epsilon}{2}, k\circ f(g)+\frac{\epsilon}{2})$$ and $$k\circ f(V_{n_0}g)\subseteq (k\circ f(g)-\frac{\epsilon}{2}, k\circ f(g)+\frac{\epsilon}{2}).$$
 Hence $$k(h\overline{f(V_{n_0})}^\circ)=k(\overline{hf(V_{n_0})}^\circ)=k(\overline{f(gV_{n_0})}^\circ)\subseteq k(\overline{f(gV_{n_0})})\subseteq \overline{k(f(gV_{n_0}))}$$$$\subseteq\overline{(k\circ f(g)-\frac{\epsilon}{2}, k\circ f(g)+\frac{\epsilon}{2})}=[k(h)-\frac{\epsilon}{2}, k(h)+\frac{\epsilon}{2}]\subseteq (k(h)-\epsilon, k(h)+\epsilon) $$ and
 $$k(\overline{f(V_{n_0})}^\circ h)=k(\overline{f(V_{n_0})h}^\circ )=k(\overline{f(V_{n_0}g)}^\circ)\subseteq k(\overline{f(V_{n_0}g)})\subseteq \overline{k(f(V_{n_0}g))}$$$$\subseteq\overline{(k\circ f(g)-\frac{\epsilon}{2}, k\circ f(g)+\frac{\epsilon}{2})}=[k(h)-\frac{\epsilon}{2}, k(h)+\frac{\epsilon}{2}]\subseteq (k(h)-\epsilon, k(h)+\epsilon). $$
\end{proof}

\begin{lemma}\label{L1}
Let $f_1:X\rightarrow Y$, $g_1:X\rightarrow X'$ $g_2:Y\rightarrow Y'$ and $f_2:X'\rightarrow Y'$ be continuous surjective mappings such that $g_2\circ f_1=f_2\circ g_1$. If $f_1$ and $g_2$ are $d$-open, then so is $f_2$.
\end{lemma}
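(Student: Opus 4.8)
The plan is to argue directly from the definition of $d$-openness and to chase the commutative square $g_2\circ f_1 = f_2\circ g_1$. Since all four maps are surjective, I will freely use that $f_1(X)=Y$, $g_2(Y)=Y'$ and $f_2(X')=Y'$, and in particular that $g_1\bigl(g_1^{-1}(A)\bigr)=A$ for every $A\subseteq X'$. The target is: given an arbitrary open set $O'\subseteq X'$, produce an open set $W\subseteq Y'$ in which $f_2(O')$ is dense.

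First I would pull $O'$ back along $g_1$. Set $O = g_1^{-1}(O')$; this is open in $X$ because $g_1$ is continuous, and $g_1(O)=O'$ by surjectivity of $g_1$. Feeding this into the commutativity relation then gives the key identity $f_2(O') = f_2\bigl(g_1(O)\bigr) = g_2\bigl(f_1(O)\bigr)$, which rewrites the statement about $f_2$ as one involving $f_1$ and $g_2$, precisely the maps I am allowed to assume are $d$-open.

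Next I apply $d$-openness twice. Since $f_1$ is $d$-open, there is an open $V\subseteq Y$ with $f_1(O)$ dense in $V$, i.e.\ $f_1(O)\subseteq V\subseteq \overline{f_1(O)}$. Since $g_2$ is $d$-open and $V$ is open, there is an open $W\subseteq Y'$ with $g_2(V)$ dense in $W$, i.e.\ $g_2(V)\subseteq W\subseteq \overline{g_2(V)}$. I claim this $W$ is the required witness. The inclusion $f_2(O')\subseteq W$ is immediate from $f_1(O)\subseteq V$, which yields $f_2(O')=g_2\bigl(f_1(O)\bigr)\subseteq g_2(V)\subseteq W$. For the reverse density inclusion I invoke continuity of $g_2$: from $V\subseteq \overline{f_1(O)}$ and continuity I obtain $g_2(V)\subseteq g_2\bigl(\overline{f_1(O)}\bigr)\subseteq \overline{g_2(f_1(O))}=\overline{f_2(O')}$, and taking closures gives $W\subseteq \overline{g_2(V)}\subseteq \overline{f_2(O')}$. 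Hence $f_2(O')$ is dense in the open set $W\subseteq Y'=f_2(X')$, so $f_2$ is $d$-open.

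The only step that genuinely requires care — and thus the main (if modest) obstacle — is the density direction $W\subseteq \overline{f_2(O')}$: it is exactly here that the continuity of $g_2$ must be used to push the closure through $g_2$ in the correct direction, whereas $d$-openness alone only controls images of open sets. Everything else is a routine diagram chase together with the surjectivity bookkeeping.
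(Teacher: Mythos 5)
Your proof is correct and takes essentially the same approach as the paper's: pull the open set back along $g_1$, use the commutative square to rewrite $f_2(O')=g_2\bigl(f_1(g_1^{-1}(O'))\bigr)$, apply $d$-openness of $f_1$ and then of $g_2$, and use continuity of $g_2$ to push the closure through and obtain $f_2(O')\subseteq W\subseteq \overline{f_2(O')}$. The only difference is notational: the paper carries out the same chain of inclusions with the explicit witnesses $\overline{f_1(g_1^{-1}(O'))}^{\circ}$ and $\overline{g_2(\cdot)}^{\circ}$ in place of your named sets $V$ and $W$.
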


\begin{proof}
Take any open set $V$ in $X'$. Then $g_1^{-1}(V)$ is open in $X$. Since $f_1$ is $d$-open, we have $f_1(g_1^{-1}(V))\subseteq \overline{f_1(g_1^{-1}(V))}^\circ$. Since $g_2\circ f_1=f_2\circ g_1$ and $g_2$ is $d$-open, we have $$f_2(V)=g_2(f_1(g_1^{-1}(V)))\subseteq g_2(\overline{f_1(g_1^{-1}(V))}^\circ)\subseteq \overline{g_2(\overline{f_1(g_1^{-1}(V))}^\circ)}^\circ$$ $$\subseteq \overline{g_2(\overline{f_1(g_1^{-1}(V))})}\subseteq \overline{\overline{g_2(f_1(g_1^{-1}(V)))}}=\overline{g_2(f_1(g_1^{-1}(V)))}=\overline{f_2(V)}.$$

Hence we have shown that $f_2(V)\subseteq \overline{g_2(\overline{f_1(g_1^{-1}(V))}^\circ}^\circ)\subseteq \overline{f_2(V)}$, which implies that $f_2$ is $d$-open.

\end{proof}

The following result was proved in \cite[Lemma 4.2]{Xie}. For the sake of completeness we give out the proof.
\begin{lemma}\label{lema2}
Let $X$ be a space and $X_{sr}$ a semiregularization of $X$. Then the identity mapping $i:X\rightarrow X_{sr}$ is a continuous $d$-open mapping.
\end{lemma}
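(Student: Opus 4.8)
The plan is to verify the two required properties of the identity map $i$ separately, writing $\tau$ for the original topology on $X$ and $\sigma$ for the coarser semiregularization topology on the same underlying set, and being careful throughout to track whether each closure or interior is computed in $\tau$ or in $\sigma$.

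First, continuity is immediate: every basic open set $\text{Int}\,\overline{U}$ (with $U\in\tau$) of $\sigma$ is itself $\tau$-open, so $\sigma\subseteq\tau$, and hence the identity from the finer topology $\tau$ to the coarser topology $\sigma$ is continuous. The substantive part is $d$-openness. Given an arbitrary $\tau$-open set $O$, I would take as candidate target the set $V=\text{Int}_\tau\,\overline{O}^{\,\tau}$. Since $O\in\tau$, this $V$ is precisely a basic open set of the semiregularization, hence $\sigma$-open; and as $i$ is surjective, $V$ is open in $i(X)=X_{sr}$. It then remains to show that $i(O)=O$ is $\sigma$-dense in $V$, for which it suffices to establish the two inclusions $O\subseteq V\subseteq\overline{O}^{\,\sigma}$. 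The first, $O\subseteq V$, holds because $O$ is a $\tau$-open subset of $\overline{O}^{\,\tau}$ and therefore lies in its $\tau$-interior.

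The one point requiring care, which I expect to be the only real obstacle, is the second inclusion $V\subseteq\overline{O}^{\,\sigma}$, and in particular getting the direction of the closure comparison right. For this I would first record the general fact that, since $\sigma\subseteq\tau$, every $\sigma$-closed set is $\tau$-closed, so the $\sigma$-closure of any set contains its $\tau$-closure, that is, $\overline{A}^{\,\tau}\subseteq\overline{A}^{\,\sigma}$ for every $A\subseteq X$. Applying this with $A=O$ gives $V=\text{Int}_\tau\,\overline{O}^{\,\tau}\subseteq\overline{O}^{\,\tau}\subseteq\overline{O}^{\,\sigma}$. Combining the two inclusions yields $\overline{O}^{\,\sigma}\cap V=V$, which is exactly the statement that $O$ is dense in $V$ with respect to $\sigma$. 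This shows $i$ is $d$-open and completes the argument. Once the comparison $\overline{A}^{\,\tau}\subseteq\overline{A}^{\,\sigma}$ is fixed and the two operators are kept apart, the remainder of the proof is purely formal.
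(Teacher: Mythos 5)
Your proposal is correct and takes essentially the same route as the paper: the paper also chooses $V=\text{Int}\,\overline{U}$ (interior and closure in the original topology) and verifies density via the chain $i(U)=U\subseteq \text{Int}\,\overline{U}\subseteq \overline{U}\subseteq \overline{i(U)}$, where the last inclusion is precisely your key comparison $\overline{O}^{\,\tau}\subseteq\overline{O}^{\,\sigma}$ coming from $\sigma\subseteq\tau$. Your write-up simply makes the bookkeeping of the two closure operators more explicit than the paper does.
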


\begin{proof}
Clearly, the mapping $i$ is continuous. Take any open set $U$ of $X$. Then one can easily show that $i(U)$ dense in $\text{~Int~} \overline{U}$ which is open in $X_{sr}$. This implies that $i$ is a $d$-open mapping. Indeed, noting that $\overline{U}\subseteq \overline{i(U)}$, we have that $i(U)=U\subseteq\text{~Int~} \overline{U}\subseteq \overline{i(U)}$.
\end{proof}

Let $\varphi_ {G, 2} : G \rightarrow H$ be a continuous surjective homomorphism of semitopological groups. The pair $(H, \varphi_{G, 2})$
is called a $T_2$ -reflection of $G$ if $H$ is a Hausdorff semitopological group and for every continuous mapping
$f : G \rightarrow X$ of $G$ to a Hausdorff space $X$, there exists a continuous mapping $h  : H \rightarrow X$ such that
$f = h\circ\varphi_{ G, 2}$. We denote by $T_2(G)$ the $T_2$-reflection of $G$, thus omitting the corresponding homomorphism
$\varphi_{ G, 2}$ \cite{T2}. The mapping $\varphi_{ G, 2}$ is called the canonical homomorphism of $G$ onto $T_2(G)$.

\begin{proposition}\label{p2}
Let $f:G\rightarrow H$ be a continuous surjective homomorphism of paratopological groups. $G_{sr}$ and $H_{sr}$ are semiregularizations of $G$ and $H$, respectively. If $f$ is $d$-open, then

\begin{enumerate}
\item[(1)] if $H$ is Hausdorff, then $f:G_{sr}\rightarrow H_{sr}$ is a continuous $d$-open mapping;
\item[(2)] there is a continuous $d$-open surjective homomorphism $\tilde{f}:T_2(G)\rightarrow T_2(H)$ such that $\tilde{f}\circ\varphi_{G,2}=\varphi_{H,2}\circ f.$
\end{enumerate}
\end{proposition}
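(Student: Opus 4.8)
For part (1) my plan is to place $f$ in a commutative square and invoke Lemma \ref{L1}, after checking by hand the one continuity statement that Lemma \ref{L1} takes as a hypothesis. Let $i_G\colon G\to G_{sr}$ and $i_H\colon H\to H_{sr}$ be the identity maps, which are continuous $d$-open surjections by Lemma \ref{lema2}. As maps of underlying sets $i_H\circ f$ and $f\circ i_G$ both coincide with $f$, so the square with top edge $f\colon G\to H$, bottom edge $f\colon G_{sr}\to H_{sr}$, and vertical edges $i_G,i_H$ commutes. Taking $f_1=f$, $g_1=i_G$, $g_2=i_H$ and $f_2=f\colon G_{sr}\to H_{sr}$ in Lemma \ref{L1} will yield that $f\colon G_{sr}\to H_{sr}$ is $d$-open, \emph{provided} I first verify that this last map is continuous and surjective. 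Surjectivity is inherited from $f\colon G\to H$.

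For the continuity of $f\colon G_{sr}\to H_{sr}$, note that the semiregularizations are again paratopological groups by Theorem \ref{th1} and that $f$ remains a homomorphism, so it suffices to check continuity at the identity. Given a basic neighbourhood $W=\overline{V}^\circ$ of $e_H$ in $H_{sr}$, with $V$ an open neighbourhood of $e_H$ in $H$, put $U=f^{-1}(V)$ and $O=\overline{U}^\circ$, a basic neighbourhood of $e_G$ in $G_{sr}$. Continuity of $f\colon G\to H$ gives $f(\overline{U})\subseteq\overline{f(U)}\subseteq\overline{V}$, hence $f(O)\subseteq\overline{V}$; while $d$-openness of $f$ applied to the open set $O$ gives $f(O)\subseteq\overline{f(O)}^\circ$. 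Since $\overline{f(O)}\subseteq\overline{V}$, monotonicity of the interior yields $f(O)\subseteq\overline{f(O)}^\circ\subseteq\overline{V}^\circ=W$. This establishes continuity at $e_G$, hence everywhere, and Lemma \ref{L1} then gives the $d$-openness, completing (1). (The argument as written does not actually invoke the Hausdorffness of $H$; via Theorem \ref{th1} that hypothesis guarantees the target $H_{sr}$ is regular, which is what the statement records.)

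For part (2) the existence and algebraic properties of $\tilde f$ come from the universal property of the $T_2$-reflection. The composite $\varphi_{H,2}\circ f\colon G\to T_2(H)$ is a continuous homomorphism into the Hausdorff group $T_2(H)$, so the defining property of $T_2(G)$ produces a continuous map $\tilde f\colon T_2(G)\to T_2(H)$ with $\tilde f\circ\varphi_{G,2}=\varphi_{H,2}\circ f$. Because $\varphi_{G,2}$ is a surjective homomorphism and $\varphi_{H,2}\circ f$ is a homomorphism, $\tilde f$ is a homomorphism; and since $f$ and $\varphi_{H,2}$ are both surjective, so is $\tilde f$. Here, unlike in (1), continuity of the new map is handed to us for free by the universal property.

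It remains to show $\tilde f$ is $d$-open, and I would again use Lemma \ref{L1}, now for the commutative square with vertical edges $\varphi_{G,2},\varphi_{H,2}$, top edge $f$, and bottom edge $\tilde f$, taking $f_1=f$ and $g_2=\varphi_{H,2}$. Since $f$ is $d$-open and all four maps are continuous surjections, the conclusion follows once I know that the canonical homomorphism $\varphi_{H,2}\colon H\to T_2(H)$ is $d$-open. This is the main obstacle: I expect to obtain it from the fact, established in \cite{T2}, that the canonical homomorphism of a semitopological group onto its $T_2$-reflection is open, and every continuous open map is continuous $d$-open. Granting this input, Lemma \ref{L1} delivers the $d$-openness of $\tilde f$ and finishes the proof.
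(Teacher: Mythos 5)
Your proposal is correct, and part (2) coincides with the paper's own argument almost word for word: the universal property of the $T_2$-reflection yields $\tilde{f}$ with $\tilde{f}\circ\varphi_{G,2}=\varphi_{H,2}\circ f$, the homomorphism property and surjectivity are checked algebraically, and Lemma \ref{L1} applies because $\varphi_{H,2}$ is continuous open (hence $d$-open), which is exactly the input the paper cites from \cite{T2}. In part (1) you follow the paper's skeleton --- by Lemmas \ref{L1} and \ref{lema2} everything reduces to continuity of $f\colon G_{sr}\rightarrow H_{sr}$ --- but your continuity check takes a genuinely different route. The paper does not use $d$-openness at this step at all: it invokes the Hausdorffness of $H$ via Theorem \ref{th1} to get regularity of $H_{sr}$, chooses a regular open $V\ni f(x)$ with $\overline{V}\subseteq U$, pulls $V$ back along $f\colon G\to H$ to an open $W\ni x$, and verifies $f(\overline{W}^\circ)\subseteq f(\overline{W})\subseteq\overline{f(W)}\subseteq\overline{V}\subseteq U$ pointwise. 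You instead reduce to continuity at the identity (legitimate, since $G_{sr}$ and $H_{sr}$ are paratopological groups by Theorem \ref{th1}, so translations are homeomorphisms) and exploit the $d$-openness of $f$ itself through $f(O)\subseteq\overline{f(O)}^\circ\subseteq\overline{V}^\circ$ with $O=\overline{f^{-1}(V)}^\circ$. Your version buys a slightly stronger statement --- as you correctly observe, the Hausdorff hypothesis on $H$ is never used --- at the cost of consuming $d$-openness twice (once in the continuity check, once inside Lemma \ref{L1}), whereas the paper's continuity argument works for an arbitrary continuous homomorphism into a Hausdorff group. One pedantic repair: a basic neighbourhood of $e_H$ in $H_{sr}$ is $\overline{V}^\circ$ for some open $V$ that need not contain $e_H$; this is harmless, since $W=\overline{V}^\circ$ is itself regular open and contains $e_H$, so you may replace $V$ by $W$ before running your argument.
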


\begin{proof}
(1) According to Lemmas \ref{L1} and \ref{lema2} it is enough to show that $f:G_{sr}\rightarrow H_{sr}$ is continuous. Take any $x\in G$ and any open neighbourhood $U$ of $f(x)$ in $H_{sr}$. Without loss of generality, we can assume that $U$ is a regular open set in $H$, i.e., $\overline{U}^\circ=U$. Since $H$ is a Hausdorff paratopological group, according to Theorem \ref{th1} there is a regular open neighbourhood $V$ of $f(x)$ such that the closure of $V$ in $H$ is contained in $U$. Noting that $f:G\rightarrow H$ is continuous, there is an open neighbourhood $W$ of $x$ in $G$ such that $f(W)\subseteq V$. Clearly, the set $\overline{W}^\circ$ is regular open in $G$ and contains $x$. Now we shall show that $f(\overline{W}^\circ)\subseteq U$, which implies that $f:G_{sr}\rightarrow H_{sr}$ is continuous.

In fact, this directly follows from the following fact $$f(\overline{W}^\circ)\subseteq f(\overline{W})\subseteq \overline{f(W)}\subseteq \overline{V}\subseteq U.$$

(2) Since $T_2(G)$ is the $T_2$-reflection of $G$ and $T_2(H)$ is Hausdorff, there is a continuous function $\tilde{f}:T_2(G)\rightarrow T_2(H)$ such that $\tilde{f}\circ\varphi_{G,2}=\varphi_{H,2}\circ f.$ Noting that $\varphi_{G,2},$ $f$ and $\varphi_{H,2}$ are surjective homomorphism, one can easily show that $\tilde{f}$ is also a surjective homomorphism. It is well known that $\varphi_{H,2}$ are continuous open, so $\varphi_{H,2}$ is continuous $d$-open. Hence, it follows from Lemma \ref{L1} that $\tilde{f}:T_2(G)\rightarrow T_2(H)$ is $d$-open.
\end{proof}

{\bf Proof of Theorem \ref{TTT} }
Since a paratopological group $Q$ is $\mathbb{R}$-factorizable iff so is $T_2(Q)$ \cite[Lemma 1.3]{PZ}, by (2) of Proposition \ref{p2} we can assume that $G$ and $H$ are Hausdorff. Let $G_{sr}$ and $H_{sr}$ are semiregularizations of $G$ and $H$, respectively. Then by (1) of Proposition \ref{p2} we have that $f:G_{sr}\rightarrow H_{sr}$ be a continuous $d$-open surjective homomorphism. Since a paratopological group $F$ is $\mathbb{R}$-factorizable iff so is $F_{sr}$ \cite[Lemma 1.1]{PZ}, we only show that $H_{sr}$ is $\mathbb{R}$-factorizable.

It is well known that every regular paratopological group is completely regular \cite[Corollary 5]{BR}. Thus according to Theorem \ref{th1} we have $G_{sr}$ and $H_{sr}$ are Tychonoff. Thus, by \cite[Lemma 1.1]{PZ}, $G_{sr}$ is a Tychonoff $\mathbb{R}$-factorizable paratopological group. Since a Tychonoff paratopological group is $\mathbb{R}$-factorizable iff it is totally $\omega$-narrow and has property $\omega$-$QU$ \cite[Theorem 3.21]{XTS}, $G_{sr}$ is totally $\omega$-narrow and has property $\omega$-$QU$. Observing that $f:G_{sr}\rightarrow H_{sr}$ be a continuous $d$-open surjective homomorphism and a continuous homomorphic image of a totally $\omega$-narrow paratopological group is totally $\omega$-narrow \cite[Proposition 3.4]{ST1}, by Proposition \ref{p1}, $H_{sr}$ is a Tychonoff totally $\omega$-narrow paratopological group with property $\omega$-$QU$. Thus $H_{sr}$ is $\mathbb{R}$-factorizable by \cite[Theorem 3.21]{XTS}. This completes the proof.

\begin{corollary}\cite[Theorem 1.7]{PZ}
Let $H$ be a paratopological group. If $H$ is a continuous open homomorphic image of an
$\R$-factorizable paratopological group, then $H$ is $\R$-factorizable.
\end{corollary}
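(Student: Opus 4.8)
The plan is to deduce this corollary directly from Theorem \ref{TTT}, the only extra ingredient being the elementary observation, already recorded in the introduction, that every continuous open mapping is continuous $d$-open. So the entire argument is a matter of checking that the hypotheses of Theorem \ref{TTT} are met and then invoking it.

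First I would unpack the hypothesis. Saying that $H$ is a continuous open homomorphic image of an $\R$-factorizable paratopological group means precisely that there exist an $\R$-factorizable paratopological group $G$ and a continuous open surjective homomorphism $f\colon G\to H$. The goal is then to show that $H$ itself is $\R$-factorizable.

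Next I would verify that $f$ is $d$-open. Recall that $f$ is $d$-open if for every open set $O$ in $G$ there is an open set $V$ in $f(G)=H$ such that $f(O)$ is dense in $V$. Since $f$ is open, the image $f(O)$ is itself open in $H$; taking $V=f(O)$, the set $f(O)$ is trivially a dense subset of $V$. Hence $f$ is a continuous $d$-open surjective homomorphism. With this verified, Theorem \ref{TTT} applies verbatim to $f\colon G\to H$ and yields that $H$ is $\R$-factorizable, which is exactly Peng and Zhang's result \cite[Theorem 1.7]{PZ}.

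As for the main obstacle, there is essentially none beyond the one-line definitional check that openness implies $d$-openness. All of the genuine difficulty has already been absorbed into Theorem \ref{TTT} and the auxiliary Propositions \ref{p1} and \ref{p2}: the passage to the $T_2$-reflection and the semiregularization, the preservation of property $\omega$-$QU$ under continuous $d$-open homomorphisms, and the characterization of Tychonoff $\R$-factorizability via total $\omega$-narrowness together with property $\omega$-$QU$. The corollary is simply the specialization of that stronger theorem to the classical open-homomorphism setting, and so requires no new idea.
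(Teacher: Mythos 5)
Your proof is correct and matches the paper's own (implicit) argument: the paper states the corollary without further proof precisely because, as noted in its introduction, every continuous open mapping is continuous $d$-open, so Theorem \ref{TTT} applies directly. Your one-line verification that $f(O)$ is open and hence dense in itself is exactly the intended definitional check.
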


Next, we shall give an positive answer to Question \ref{Q}. Recall that a paratopological group $G$ is {\it $\omega$-narrow} if for each open neighbourhood of the identity in $G$ there is a countable subset $A$ of $G$ such that $AU=UA=G$. A paratopological group $G$ is called {\it totally $\omega$-narrow} if $G$ is a continuous homomorphism image of an $\omega$-narrow topological group. It is well known that every regular totally $\omega$-narrow paratopological group $G$ has countable index of regularity, i.e., $Ir(G)\leq \omega$ \cite[Theorem 2]{IS}, and evrey totally $\omega$-narrow first countble paratopological group has a countable base \cite[Proposition 3.5]{ST1}. Since a continuous homomorphic image of a totally $\omega$-narrow paratopological group is totally $\omega$-narrow \cite[Proposition 3.4]{ST1}, form the proof of \cite[Theorem 3.6]{T} one can obtain the following result:

\begin{lemma}\label{LL1}
Let $G$ be a regular totally $\omega$-narrow paratopological group. Then for each open neighbourhood $U$ of the identity $e$ in $G$, there are a continuous homomorphism $p_U: G \rightarrow H$ onto a second-countable regular paratopological group $H$ and a neighbourhood $V$ of the
neutral element in $H$ such that $p_U^{-1}(V)\subseteq U$.
\end{lemma}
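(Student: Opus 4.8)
The plan is to produce $H$ as an abstract quotient $G/N$ of $G$ modulo a closed invariant subgroup $N\subseteq U$, but equipped with a \emph{coarse} group topology generated by a single countable descending family of neighbourhoods; the map $p_U$ will be the canonical projection, which will be continuous (though not necessarily open) for this topology. Thus it suffices to build a closed normal subgroup $N\subseteq U$ together with a decreasing sequence $\{O_n:n\in\omega\}$ of open symmetric neighbourhoods of $e$ with $O_0\subseteq U$, such that the images $\{p_U(O_n):n\in\omega\}$ form a base at the neutral element for a second-countable regular paratopological group topology on $G/N$.

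First I would construct the sequence $\{O_n\}$ by induction, imposing three families of conditions. The \emph{algebraic} conditions $O_{n+1}=O_{n+1}^{-1}$ and $O_{n+1}O_{n+1}O_{n+1}\subseteq O_n$ guarantee that $N=\bigcap_{n\in\omega}O_n$ is a subgroup and that $\{O_nN\}$ satisfies the neighbourhood axioms of a paratopological group topology. The \emph{invariance} conditions make $N$ normal: using $\omega$-narrowness I fix for each $n$ a countable set $A_n$ with $A_nO_n=O_nA_n=G$ and require $a\,O_{n+1}\,a^{-1}\subseteq O_n$ for every $a$ in the countable set $\bigcup_{k\le n}A_k$. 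Since each inner automorphism of a paratopological group is a homeomorphism (a composition of two translations), only countably many constraints are active at each step and a suitable $O_{n+1}$ exists; writing an arbitrary $x\in G$ as $x=au$ with $a\in A_k$ and $u\in O_k$ then upgrades this to $xNx^{-1}=N$. Finally, the \emph{regularity} conditions $\overline{O_{n+1}}\subseteq O_n$ are what force the quotient to be $T_3$; these can be met because $G$ is regular with $Ir(G)\le\omega$, so the closure of a neighbourhood is controlled by countably many neighbourhoods, and this control descends to the quotient.

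With $N=\bigcap_n O_n=\bigcap_n\overline{O_{n+1}}$ in hand, $N$ is a closed normal subgroup and $N\subseteq O_0\subseteq U$. Let $\tau$ be the group topology on $G/N$ having $\{p_U(O_n):n\in\omega\}$ as a base at the identity. Since each $p_U^{-1}(p_U(O_n))=O_nN$ is open in $G$, the projection $p_U\colon G\to(G/N,\tau)$ is a continuous surjective homomorphism, so $(G/N,\tau)$ is totally $\omega$-narrow as a continuous homomorphic image of $G$ (recalled above). By construction $(G/N,\tau)$ is first-countable and, thanks to the conditions $\overline{O_{n+1}}\subseteq O_n$, it is regular; in particular $\bigcap_nO_nN=N$ together with the symmetry $O_n=O_n^{-1}$ makes the identity closed, giving the $T_1$ axiom. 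A totally $\omega$-narrow first-countable paratopological group has a countable base (recalled above), so $(G/N,\tau)$ is second-countable and regular. Taking $V=p_U(O_1)$, which is $\tau$-open, we obtain $p_U^{-1}(V)=O_1N\subseteq O_1O_1\subseteq O_0\subseteq U$, as required.

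The delicate point is the single inductive construction of $\{O_n\}$ reconciling all three families of conditions at once. Normality requires controlling \emph{all} conjugates of the small neighbourhoods, which is possible only because $\omega$-narrowness reduces this to countably many conjugations per step; regularity of the resulting second-countable quotient requires controlling closures by countably many neighbourhoods, which is exactly what the hypothesis $Ir(G)\le\omega$ supplies. Paratopological subtleties sharpen the difficulty: inversion need not be continuous, so the symmetry of the $O_n$ and the passage from countably many controlled conjugates to every $x\in G$ must be arranged by hand rather than inherited, and the target $G/N$ must be handed a coarse topology since its full quotient topology need not be second-countable.
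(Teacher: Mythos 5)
Your construction breaks at its very first move: in a paratopological group there is in general \emph{no} open symmetric neighbourhood of $e$ inside a prescribed $U$. If every neighbourhood of $e$ contained an open $V$ with $V=V^{-1}$, inversion would be continuous at $e$ (given $U$, such a $V\subseteq U$ satisfies $V^{-1}\subseteq U$) and hence continuous everywhere, so $G$ would already be a topological group --- which the hypotheses do not give. Concretely, let $G=\mathbb{Q}$ with the Sorgenfrey topology: this is a regular second-countable paratopological group, and it is totally $\omega$-narrow (the identity map from the discrete, hence $\omega$-narrow, topological group $\mathbb{Q}$ onto it is a continuous homomorphism); yet for $U=[0,1)\cap\mathbb{Q}$ every symmetric subset of $U$ lies in $[0,1)\cap(-1,0]=\{0\}$, which is not open. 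So your $O_0$ simply does not exist, even though the lemma is trivially true for this $G$. Nor is symmetry a removable convenience in your argument: you use it to make $N=\bigcap_nO_n$ a subgroup (without it the condition $O_{n+1}^3\subseteq O_n$ yields only a closed submonoid), to get the $T_1$ property of the quotient, and --- most seriously --- in the upgrade of conjugation control from the countable sets $A_k$ to arbitrary $x=au$: in $xO_mx^{-1}=a\,(uO_mu^{-1})\,a^{-1}$ you must absorb $u^{-1}$ into a controlled neighbourhood, and that absorption \emph{is} continuity of inversion. This is exactly the point where the Guran-type induction you are transplanting uses the topological-group structure, and why it cannot be run inside the paratopological group $G$ itself.

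Two further steps are defective. At the inductive step you impose the countably many open constraints $O_{n+1}\subseteq a^{-1}O_na$ for $a\in\bigcup_{k\le n}A_k$ and assert a suitable open $O_{n+1}$ exists; but a countable intersection of open neighbourhoods of $e$ need not contain one (the classical repair --- finitely many constraints per step via bookkeeping --- only feeds back into the irreparable absorption problem above). And your appeal to $Ir(G)\le\omega$ is misplaced: obtaining $\overline{O_{n+1}}\subseteq O_n$ in $G$ needs only regularity of $G$; what the countable index of regularity is actually for is controlling closures in the \emph{coarse quotient} topology, where $\overline{A}=\bigcap_m A\,p_U(O_m)^{-1}$ and inverses reappear --- your sentence ``this control descends to the quotient'' is precisely the content that must be proved. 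Compare with the paper: it gives no self-contained construction at all, but quotes \cite[Theorem 2]{IS} for $Ir(G)\le\omega$ and extracts from the proof of \cite[Theorem 3.6]{T} a continuous homomorphism onto a \emph{first-countable} regular paratopological group with $p_U^{-1}(V)\subseteq U$; that proof operates under $\omega$-balancedness-type hypotheses, which supply exactly the conjugation control your induction tries to improvise from plain $\omega$-narrowness of $G$ --- notice that your construction never uses the ``totally'' part of the hypothesis where it is indispensable. Only your final step --- upgrading first-countable to second-countable because continuous homomorphic images preserve total $\omega$-narrowness and totally $\omega$-narrow first-countable groups have countable base \cite[Propositions 3.4 and 3.5]{ST1} --- coincides with the paper's route and is correct.
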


\begin{proposition}\label{P}
Let $G$ be a regular totally $\omega$-narrow paratopological group. Then every $\mathbb{R}$-factorizable subgroup $H$ of $G$ is $z$-embedded in $G$.
\end{proposition}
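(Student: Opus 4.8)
The plan is to start from an arbitrary zero-set $Z$ in $H$, write $Z=f^{-1}(0)$ for some continuous $f\colon H\to\R$, and manufacture a single continuous homomorphism $p\colon G\to M$ onto a second-countable (hence metrizable) paratopological group through which $f$ factors on $H$; the zero-set of $G$ witnessing $z$-embeddedness will then be obtained by pulling a zero-set of $M$ back along $p$. First I would use that $H$ is $\R$-factorizable: by Theorem~\ref{T1} I may factor $f=h\circ\pi$, where $\pi\colon H\to K$ is a continuous surjective homomorphism onto a second-countable \emph{regular} paratopological group $K$ and $h\colon K\to\R$ is continuous. I then fix a countable base $\{W_n:n\in\omega\}$ of open neighbourhoods of the identity in $K$; since $K$ is $T_1$ we have $\bigcap_n W_n=\{e_K\}$. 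Each $\pi^{-1}(W_n)$ is open in the subspace $H$, so I can choose an open neighbourhood $\tilde U_n$ of the identity in $G$ with $\tilde U_n\cap H=\pi^{-1}(W_n)$.

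Next I would bring in the hypotheses on $G$. As $G$ is regular and totally $\omega$-narrow, Lemma~\ref{LL1} applied to each $\tilde U_n$ produces a continuous homomorphism $p_n\colon G\to M_n$ onto a second-countable regular paratopological group together with a neighbourhood $V_n$ of the neutral element in $M_n$ such that $p_n^{-1}(V_n)\subseteq\tilde U_n$. I form the diagonal product $p=\Delta_{n}p_n\colon G\to M$, where $M=p(G)\subseteq\prod_n M_n$. Each $M_n$ is second-countable and regular, hence metrizable by Urysohn's theorem, so the countable product $\prod_n M_n$ and its subspace $M$ are metrizable; in particular $p$ is a continuous surjective homomorphism onto a second-countable paratopological group, and $M$ is perfectly normal.

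The key step is to show that $\pi$ factors through $p|_H$. By construction $p_n^{-1}(V_n)\cap H\subseteq\tilde U_n\cap H=\pi^{-1}(W_n)$, and writing $\hat V_n=\mathrm{pr}_n^{-1}(V_n)\cap M$ for the corresponding basic neighbourhood of the identity in $M$ we have $p^{-1}(\hat V_n)=p_n^{-1}(V_n)$, whence $(p|_H)^{-1}(\hat V_n)\subseteq\pi^{-1}(W_n)$ for every $n$. This refinement, combined with $\bigcap_n W_n=\{e_K\}$, forces $\ker(p|_H)\subseteq\ker(\pi)$, so the assignment $\psi(p(x))=\pi(x)$ defines a homomorphism $\psi\colon p(H)\to K$; the same refinement shows $\psi$ is continuous at the identity, hence continuous. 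Consequently $f=h\circ\pi=(h\circ\psi)\circ(p|_H)$, i.e. $f=g\circ(p|_H)$ with $g=h\circ\psi\colon p(H)\to\R$ continuous. I expect this factorization to be the main obstacle, since it requires verifying both the algebraic inclusion of kernels and the continuity of the induced map; this is precisely where the $T_1$ property of $K$ and Lemma~\ref{LL1} are essential.

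Finally I would transfer the zero-set back to $G$. The set $Z(g)=g^{-1}(0)$ is a zero-set of $p(H)$ and $Z=(p|_H)^{-1}(Z(g))$. Because $M$ is metrizable, hence perfectly normal, the closed set $C_0=\overline{Z(g)}^{\,M}$ is a zero-set of $M$, and since $Z(g)$ is closed in $p(H)$ we get $C_0\cap p(H)=\overline{Z(g)}^{\,p(H)}=Z(g)$. Then $C=p^{-1}(C_0)$ is a zero-set of $G$, as the preimage of a zero-set under the continuous map $p$, and $C\cap H=(p|_H)^{-1}\!\big(C_0\cap p(H)\big)=(p|_H)^{-1}(Z(g))=Z$. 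Thus every zero-set $Z$ of $H$ is the trace on $H$ of a zero-set of $G$, which is exactly the statement that $H$ is $z$-embedded in $G$.
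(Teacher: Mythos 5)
Your proposal is correct and takes essentially the same route as the paper's own proof: factor $f$ through a second-countable regular group $K$ by $\mathbb{R}$-factorizability, apply Lemma~\ref{LL1} to the (extended) pullbacks of a countable base at the identity of $K$, form the diagonal product $p$, obtain the induced continuous homomorphism $p(H)\to K$ from the kernel inclusion and continuity at the identity, and recover the zero-set in $G$ via perfect normality of the metrizable image $p(G)$. If anything, you are slightly more explicit than the paper at one point, namely in extending the relatively open sets $\pi^{-1}(W_n)\subseteq H$ to open neighbourhoods $\tilde U_n$ in $G$ before invoking Lemma~\ref{LL1}, a step the paper leaves implicit.
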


\begin{proof}
Let $Z$ be a zero-set in $H$. Then there is a continuous function $f:H\rightarrow \mathbb{R}$ such that $Z=f^{-1}(0)$. Since $H$ is $\mathbb{R}$-factorizable, there are a continuous homomorphism $h$ on $H$ onto a regular second paratopological group $K$, and a continuous function $g:K\rightarrow \mathbb{R} $ such that $f=g\circ h$. Let $\{O_n:n\in \omega\}$ be a base at the identity in $K$. Then the family $\{h^{-1}(O_n):n\in \omega\}$ is a countable open neighbourhood of the identity in $H$. Since $G$ is regular totally $\omega$-narrow, by Lemma \ref{LL1}, for each $n\in \omega$ there are a continuous homomorphism $p_{O_n}: G \rightarrow F_{O_n}$ onto a second-countable regular paratopological group $F_{O_n}$ and a neighbourhood $V_{O_n}$ of the neutral element in $F_{O_n}$ such that $p_{O_n}^{-1}(V_{O_n})\cap H\subseteq h^{-1}(O_n)$. Let $p$ be the diagonal product of the family $\{p_{O_n}:n\in \omega\}$. Then $p(G)$ is a metrizable paratopological group. Since $p_{O_n}^{-1}(V_{O_n})\cap H\subseteq h^{-1}(O_n)$ and $\{O_n:n\in \omega\}$ be a base at the identity in $K$, we have that $\text{ker}p\cap H\subseteq \text{ker}h$, where $\text{ker}p$ and $\text{ker}h$ are the homomorphism kernels of $p$ and $h$, respectively. Hence there is a natural homomorphism $j:p(H)\rightarrow K$ such that $h=j\circ p|_H$.

 Now we shall show that the homomorphism $j:p(H)\rightarrow K$ is continuous. In fact, it is enough to show that $j$ is continuous at the identity of $p(H)$. Take any open neighbourhood $W$ of the identity in $K$. Since $\{O_n:n\in \omega\}$ is a base at the identity in $K$, there is $O_i\in \{O_n:n\in \omega\}$ such that $O_i\subseteq W$. Observing that $$p^{-1}(\pi_{i}^{-1}(V_{O_i}) )\cap H=p^{-1}_{O_i}(V_{O_i})\cap H\subseteq h^{-1}(O_i), $$ where $\pi_{i}:\prod_{k<\omega}F_{O_k}\rightarrow F_{O_i}$ is the projection of the $i$th factor. Since $h=j\circ p|_H$, we have that $j(\pi_{i}^{-1}(V_{O_i})\cap p(H))\subseteq O_i\subseteq W$. Clearly, $\pi_{i}^{-1}(V_{O_i})\cap p(H)$ is a open neighbourhood of the identity in $p(H)$, and therefore $j$ is a continuous homomorphism.

 Since $f=g\circ h$ and $h=j\circ p|_H$, we have that $f=g\circ j\circ p|_H$. Put $A=j^{-1}(g^{-1}(0)$, Then $$Z=f^{-1}(0)=p|_H^{-1}(A)=p|_H^{-1}(\overline{A}\cap p(H))=p^{-1}(\overline{A})\cap H.$$

 Since $p(G)$ is a metrizable space and $\overline{A}$ is a closed set in $p(G)$, $\overline{A}$ is a zero set in $p(G)$. Hence, $p^{-1}(\overline{A})$ is a zero set in $G$. The proof is finished.
\end{proof}

\begin{theorem}\label{TT}
Let $G$ be a regular $\mathbb{R}$-factorizable paratopological group. Then every subgroup $H$ of $G$ is $\mathbb{R}$-factorizable  if and only if $H$ is $z$-embedded in $G$.
\end{theorem}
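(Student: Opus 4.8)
The plan is to split the biconditional into its two implications, each of which collapses onto a result already in hand. The common preliminary step is to observe that the hypotheses force $G$ to be totally $\omega$-narrow. Indeed, since $G$ is regular it is completely regular by \cite[Corollary 5]{BR}, and a Tychonoff $\mathbb{R}$-factorizable paratopological group is totally $\omega$-narrow (and has property $\omega$-$QU$) by \cite[Theorem 3.21]{XTS}. Hence $G$ is a regular totally $\omega$-narrow paratopological group, which is exactly the setting of Proposition \ref{P}.

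For the implication ``$H$ $\mathbb{R}$-factorizable $\Rightarrow$ $H$ $z$-embedded in $G$'', I would invoke Proposition \ref{P} directly: having verified that $G$ is regular and totally $\omega$-narrow, that proposition asserts precisely that every $\mathbb{R}$-factorizable subgroup of $G$ is $z$-embedded in $G$. All of the substantive work for this direction has already been carried out in the proof of Proposition \ref{P}, so nothing remains beyond citing it.

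For the converse ``$H$ $z$-embedded in $G$ $\Rightarrow$ $H$ $\mathbb{R}$-factorizable'', I would appeal to the embedding theorem of Sanchis and Tkachenko. Since $G$ is $\mathbb{R}$-factorizable, Theorem \ref{T1} upgrades this to $\mathbb{R}_3$-factorizability, and $G$ is regular, i.e. $T_3+T_1$; thus \cite[Theorem 3.12]{ST} applies with $i=3$ and shows that the $z$-embedded subgroup $H$ is $\mathbb{R}_3$-factorizable. Applying Theorem \ref{T1} once more (the notions $\mathbb{R}$- through $\mathbb{R}_3$-factorizability coincide) returns that $H$ is $\mathbb{R}$-factorizable, completing the biconditional.

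The theorem therefore has no genuine obstacle of its own: its entire difficulty is concentrated in Proposition \ref{P}, and the remaining task is purely one of bookkeeping the separation axioms---checking that regularity of $G$ legitimately passes to complete regularity (so that \cite[Theorem 3.21]{XTS} is applicable) and that the $T_3$ case of \cite[Theorem 3.12]{ST} is the relevant one. I expect the only place one might slip is in this alignment of separation hypotheses across the several cited results.
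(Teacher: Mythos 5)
Your proposal is correct and follows essentially the same route as the paper: the sufficiency is exactly the paper's combination of \cite[Theorem 3.12]{ST} with Theorem \ref{T1}, and the necessity reduces, as in the paper, to Proposition \ref{P} after noting $G$ is completely regular by \cite[Corollary 5]{BR} and hence totally $\omega$-narrow. The only (immaterial) difference is that you derive total $\omega$-narrowness from \cite[Theorem 3.21]{XTS} where the paper cites \cite[Proposition 3.10]{XTS}; both yield the same conclusion.
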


\begin{proof}
  Since a $z$-embedded subgroup of an $\mathbb{R}_i$-factorizable paratopological group is $\mathbb{R}_i$-factorizable, for $i = 1, 2, 3$
  \cite[Theorem 3.12]{ST}, by Theorem \ref{T1} the sufficiency is proved.

Necessity. Since every regular paratopological group is completely regular \cite[Corrollary 5]{BR} and every completely regular $\mathbb{R}$-factorizable paratopological group is totally $\omega$-narrow \cite[Proposition 3.10]{XTS}, the statement directly follows from Proposition \ref{P}.
\end{proof}

It is clear that every retract of a space $X$ is $z$-embedded in $X$.  Note also
that if $G$ is a paratopological group and $H$ is an open subgroup of $G$, then $H$ is a retract of $G$. Indeed, in every left coset $U$ of $H$ in $G$, pick a point $x_U\in U$. Define $r:G\rightarrow H$ in the following way:  if $g\in H$, then $r(g)=g$; if $g\in U$ and $U\neq H$, then $r(g)=x_U^{-1}g$. Since the left cosets are open and disjoint, the continuity of $r$ is immediate. From these two observations we deduce the following result

\begin{corollary}
Let $G$ be a $\R$-factorizable paratopological group and $H$ is retract of $G$. Then $H$ is $\R$-factorizable.
\end{corollary}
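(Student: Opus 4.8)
The plan is to deduce the statement from the two observations recorded just above it, together with the chain of cited results underlying the sufficiency half of Theorem \ref{TT}. Since $H$ is a subgroup of the paratopological group $G$ carrying the subspace topology, it is itself a paratopological group, so the notion of $\R$-factorizability makes sense for it; the only substantive input is that a continuous retraction onto $H$ forces $H$ to be $z$-embedded, after which the $z$-embedding machinery does the rest.

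First I would verify the fact, asserted to be clear in the text, that a retract is $z$-embedded, since this is the one place the retraction is actually used. Let $r\colon G\to H$ be a continuous map with $r|_H=\mathrm{id}_H$, and let $Z$ be a zero-set in $H$, say $Z=g^{-1}(0)$ with $g\colon H\to\R$ continuous. Then $g\circ r\colon G\to\R$ is continuous, so $C=(g\circ r)^{-1}(0)$ is a zero-set in $G$, and using $r|_H=\mathrm{id}_H$ one checks $C\cap H=\{x\in H:g(r(x))=0\}=\{x\in H:g(x)=0\}=Z$. Hence $H$ is $z$-embedded in $G$.

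It then remains to feed this into the sufficiency argument. By Theorem \ref{T1} an $\R$-factorizable paratopological group is $\R_3$-factorizable, by \cite[Theorem 3.12]{ST} a $z$-embedded subgroup of an $\R_3$-factorizable group is $\R_3$-factorizable, and hence it is $\R$-factorizable again by Theorem \ref{T1}. This is precisely the chain used to establish the sufficiency of Theorem \ref{TT}, and since it imposes no separation hypothesis on $G$, the corollary follows for an arbitrary $\R$-factorizable $G$.

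I expect no real obstacle here: the entire content sits in the elementary pull-back computation above and in the already-cited sufficiency, the point being merely to observe that they combine. As a sanity check one can also argue directly---pull an arbitrary continuous $f\colon H\to\R$ back to $f\circ r$ on $G$, factor it as $h\circ\pi$ through a continuous homomorphism $\pi$ onto a second-countable group, and restrict to $H$ to obtain $f=h\circ(\pi|_H)$ with $\pi|_H\colon H\to\pi(H)$ a continuous homomorphism onto the second-countable subgroup $\pi(H)$---which reproves the result without invoking $z$-embeddedness at all. Finally, the second observation, that open subgroups are retracts, shows the corollary specializes in particular to every open subgroup of an $\R$-factorizable paratopological group.
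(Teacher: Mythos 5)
Your proof is correct and follows essentially the same route as the paper: the authors likewise deduce the corollary from the observation that a retract is $z$-embedded in $G$ (your pull-back computation $C=(g\circ r)^{-1}(0)$, $C\cap H=Z$ is exactly the content of that observation) combined with the sufficiency half of Theorem \ref{TT}, i.e.\ Theorem \ref{T1} together with \cite[Theorem 3.12]{ST}, which indeed needs no separation hypothesis on $G$. Your ``sanity check'' --- factoring $f\circ r$ through a continuous homomorphism onto a second-countable group and restricting to $H$ --- is a valid and even more elementary alternative that bypasses the $z$-embedding machinery altogether, but your main argument is the paper's own.
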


\begin{corollary}
Every open subgroup of $\R$-factorizable paratopological group is $\R$-factorizable.
\end{corollary}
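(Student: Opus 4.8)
The plan is to read this off from the preceding corollary on retracts together with the observation, recorded just above, that an open subgroup of a paratopological group is always a retract; there is essentially no new content to supply beyond assembling these two facts in the right order.

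First I would confirm that the open subgroup $H$ is a retract of $G$. Since $H$ is open, every left coset $gH$ is open, and because the cosets partition $G$ each of them is the complement of the union of the others and hence also closed, i.e.\ clopen. Picking a representative $x_U$ in each coset $U\neq H$ and setting $r(g)=g$ for $g\in H$ and $r(g)=x_U^{-1}g$ for $g\in U$ with $U\neq H$ gives a well-defined map into $H$ (as $x_U^{-1}g\in H$ whenever $g,x_U$ lie in the same coset) that fixes $H$ pointwise and coincides with a left translation on each clopen piece; such a map is continuous, so $r$ is a retraction of $G$ onto $H$. In the write-up I would simply cite the paragraph above for this construction rather than reproduce it.

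Second, I would invoke the preceding corollary: $G$ is $\R$-factorizable and $H$ is a retract of $G$, whence $H$ is $\R$-factorizable, and the proof is complete. I do not anticipate any real obstacle, since both ingredients are already in hand. The one point worth flagging is that no separation axiom on $G$ is needed, despite Theorem \ref{TT} being phrased for regular $G$: the retract corollary relies only on the sufficiency half of that theorem — a $z$-embedded subgroup of an $\R$-factorizable group is $\R$-factorizable, which comes from \cite[Theorem 3.12]{ST} via Theorem \ref{T1} — and this half is valid for an arbitrary $\R$-factorizable $G$, so the final corollary holds with no regularity hypothesis.
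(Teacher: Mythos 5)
Your proof is correct and follows the paper's own route exactly: the paper likewise deduces this corollary by combining the retraction $r(g)=x_U^{-1}g$ (showing an open subgroup is a retract, hence $z$-embedded) with the preceding corollary on retracts of $\R$-factorizable groups. Your added remark that no regularity on $G$ is needed — since the sufficiency half of Theorem \ref{TT} rests only on \cite[Theorem 3.12]{ST} together with Theorem \ref{T1} — is accurate and consistent with the paper stating both corollaries without separation hypotheses.
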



\end{document}